\pgfplotsset{compat=1.15}
\def\NZQ{\Bbb}               
\def\KK{{\NZQ K}}
\renewcommand{\qedsymbol}{$\square$}
\def\opn#1#2{\def#1{\operatorname{#2}}} 
\opn\chara{char} \opn\length{\ell} \opn\pd{pd} \opn\rk{rk}
\opn\projdim{proj\,dim} \opn\injdim{inj\,dim} \opn\rank{rank}
\opn\depth{depth} \opn\grade{grade} \opn\height{height}
\opn\embdim{emb\,dim} \opn\codim{codim}
\opn\Tr{Tr} \opn\bigrank{big\,rank}
\opn\superheight{superheight}\opn\lcm{lcm}
\opn\trdeg{tr\,deg}
	\opn\reg{reg} \opn\lreg{lreg} \opn\ini{in} \opn\lpd{lpd}
	\opn\size{size} \opn\sdepth{sdepth}
	\opn\link{link}\opn\fdepth{fdepth}\opn\lex{lex}\opn\dist{dist}
	\opn\div{div} \opn\Div{Div} \opn\cl{cl} \opn\Cl{Cl}
	\opn\Spec{Spec} \opn\Supp{Supp} \opn\supp{supp} \opn\Sing{Sing}
	\opn\Ass{Ass} \opn\Min{Min}\opn\Mon{Mon}
	\opn\Ann{Ann} \opn\Rad{Rad} \opn\Soc{Soc}
	\opn\Im{Im} \opn\Ker{Ker} \opn\Coker{Coker} \opn\Am{Am}
	\opn\Hom{Hom} \opn\Tor{Tor} \opn\Ext{Ext} \opn\End{End}
	\opn\Aut{Aut} \opn\id{id}
	\opn\nat{nat}
	\opn\pff{pf}
	\opn\Pf{Pf} \opn\GL{GL} \opn\SL{SL} \opn\mod{mod} \opn\ord{ord}
	\opn\Gin{Gin} \opn\Hilb{Hilb}\opn\sort{sort}
	\opn\aff{aff} \opn
\opn\relint{relint} \opn\st{st}
	\opn\lk{lk} \opn\cn{cn} \opn\core{core} \opn\vol{vol}
	\opn\link{link} \opn\star{star}\opn\lex{lex}\opn\set{set}
	\opn\gr{gr}
	\def\pot#1#2{#1[\kern-0.28ex[#2]\kern-0.28ex]}
	\opn\dirlim{\underrightarrow{\lim}}
	\opn\inivlim{\underleftarrow{\lim}}
	\def\Implies{\ifmmode\Longrightarrow \else
		\unskip${}\Longrightarrow{}$\ignorespaces\fi}
	\def\implies{\ifmmode\Rightarrow \else
		\unskip${}\Rightarrow{}$\ignorespaces\fi}
	\def\iff{\ifmmode\Longleftrightarrow \else
		\unskip${}\Longleftrightarrow{}$\ignorespaces\fi}
	\let\epsilon\varepsilon
	\let\kappa=\varkappa
	\def\qed{\ifhmode\textqed\fi
		\ifmmode\ifinner\quad\qedsymbol\else\dispqed\fi\fi}
	\def\textqed{\unskip\nobreak\penalty50
		\hskip2em\hbox{}\nobreak\hfil\qedsymbol
		\parfillskip=0pt \finalhyphendemerits=0}
	\def\dispqed{\rlap{\qquad\qedsymbol}}
	\opn\dis{dis}
	\def\pnt{{\raise0.5mm\hbox{\large\bf.}}}
	\opn\Lex{Lex}
        \newtheorem{Theorem}{Theorem}[section]
	\newtheorem{Corollary}[Theorem]{Corollary}
	\newtheorem{Proposition}[Theorem]{Proposition}
\begin{document}

        \title[]{Complementary edge ideals}

\author[T.~Hibi]{Takayuki Hibi}
\address[Takayuki Hibi]
{Department of Pure and Applied Mathematics, 
Graduate School of Information Science and Technology, 
Osaka University, 
Suita, Osaka 565-0871, Japan}
\email{hibi@math.sci.osaka-u.ac.jp}

\author[A. A. Qureshi]{Ayesha Asloob Qureshi}      
   \address[Ayesha Asloob Qureshi]{Sabanci University, Faculty of Engineering and Natural Sciences, Orta Mahalle, Tuzla 34956, Istanbul, Turkey}	
\email{aqureshi@sabanciuniv.edu, ayesha.asloob@sabanciuniv.edu}

\author[S.~Saeedi~Madani]{Sara Saeedi Madani}
\address[Sara Saeedi Madani]
{Department of Mathematics and Computer Science, Amirkabir University of Technology, Tehran, Iran, and School of Mathematics, Institute for Research in Fundamental Sciences, Tehran, Iran} 
\email{sarasaeedi@aut.ac.ir, sarasaeedim@gmail.com}

	\keywords{Complementary edge ideals, Cohen-Macaulay, linear resolution}
	
    \subjclass[2020]{05E40, 05C25, 13C70}
    
	\thanks{} 

        \maketitle

\begin{abstract}
In this paper, we introduce the concept of complementary edge ideals of graphs and study their algebraic properties and invariants. 
\end{abstract}

\section*{Introduction}
Since the birth of Combinatorial Commutative Algebra in 1975, the study of squarefree monomial ideals has been one of the central topics in this area. Among them, the squarefree monomial ideals of degree 2, namely the edge ideals of graphs, have been widely studied. In this paper, we introduce the concept of complementary edge ideals of graphs.

Let $S=\KK[x_1, \ldots, x_n]$ be the polynomial ring over a field $\KK$ in $n\geq 4$ variables. Note that the squarefree monomial ideal $I$ generated in degree $n$ is just a principal ideal, and if $I$ is generated by arbitrary squarefree monomials of degree $n-1$, then $I$ is matroidal, which is well--studied in the literature.  In this paper, we consider the ideals of $S$ generated by an arbitrary set of degree $n-2$ squarefree monomials. These ideals naturally correspond to simple graphs on $n$ vertices. Let $G$ be a simple graph with vertex set $V(G)=[n]=\{1, \ldots, n\}$ and edge set $E(G)$. Then we define the {\em complementary edge ideal} associated to $G$ as
\[
I_c(G)=(x_1\cdots x_n/x_ix_j : \{i,j\} \in E(G)).
\]
In \cite{ALS}, the authors introduced the notion of the generalized Newton complementary dual of a monomial ideal, which provides a general framework for all monomial ideals. In the special case of squarefree monomial ideals of degree two, this construction yields the complementary edge ideals defined above.

If $G$ is a complete graph, then $I_c(G)$ is simply a squarefree Veronese ideal $I_{n,n-2}$ of $S$. It is known \cite[Theorem 12.6.2, Theorem 12.6.7]{HHBook} that $I_{n,n-2}$ has linear resolution and $S/I_{n,n-2}$ is Cohen-Macaulay and hence level, but never Gorenstein. Then, it is natural to ask when $I_c(G)$ admits the aforementioned properties for an arbitrary simple graph $G$. 

A breakdown of the contents of the paper is given as follows. In Section~\ref{sec1}, we consider the Alexander dual of $I_c(G)$ and characterize the sequentially Cohen--Macaulay, Cohen-Macaulay and Gorenstein complementary edge ideals in terms of their underlying graphs. In Section\ref{sec2}, we characterize all graphs $G$ for which $I_c(G)$ admits linear resolution. In Section~\ref{sec3}, we discuss the Betti numbers of $I_c(G)$ and characterize when $I_c(G)$ has pure resolution and when $S/I_c(G)$ is level. Finally in Section~\ref{sec4}, we discuss all possible pairs $(\pd(I_c(G)), \reg(I_c(G)))$.

\section{Alexander dual}\label{sec1}
In this section, we consider the Alexander dual of $I_c(G)$ and characterize when $I_c(G)$ is unmixed, and when $S/I_c(G)$ is Cohen-Macaulay or Gorenstein. 

Let $G$ be a graph on $[n]$ and let $T\subseteq [n]$. We define the monomial prime ideal $P_G(T)$ in $S$ to be the ideal generated by the variables corresponding to the elements of $T$, namely,
\[
P_G(T)=(x_i: i\in T).
\]

We describe the minimal prime ideals of complementary edge ideals in the next theorem. Given a graph $G$, we denote the complementary graph of $G$ by $\overline{G}$. Given any $T\subseteq V(G)$, the induced subgraph of $G$ on $T$ is denoted by $G|_T$. Moreover, the complete graph on $n$ vertices is denoted by $K_n$. 

\begin{Theorem}\label{Min}
    Let $G$ be a graph on $[n]$. Then the minimal prime ideals of $I_c(G)$ are as follows:
    \[
    \Min{I_c(G)}=\{P_G(T): T\subseteq [n], G|_{T}~\textit{is isomorphic to}~K_3~\textit{or}~\overline{K_2}\}.
    \]
    In particular, $\height(I_c(G))=2$ and $S/I_c(G)$ is Cohen--Macaulay if and only if $\pd(I_c(G))=1$, for any noncomplete graph $G$.
\end{Theorem}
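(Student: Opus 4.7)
The plan is to reduce the determination of $\Min I_c(G)$ to a purely combinatorial cover problem, carry out a small case analysis on $|T|$, and then read off the height and the Cohen--Macaulay criterion from Auslander--Buchsbaum. Since $I_c(G)$ is a squarefree monomial ideal, every minimal prime has the form $P_G(T)=(x_i:i\in T)$ for some $T\subseteq[n]$, and $P_G(T)\supseteq I_c(G)$ iff every generator $u_{ij}=\prod_{k\neq i,j}x_k$ lies in $P_G(T)$. Concretely, $u_{ij}\in P_G(T)$ iff some $k\in T$ satisfies $k\notin\{i,j\}$, i.e., iff $T\not\subseteq\{i,j\}$. Thus $P_G(T)\supseteq I_c(G)$ precisely when $T\not\subseteq\{i,j\}$ for every edge $\{i,j\}\in E(G)$, and the minimal primes correspond to the minimal (under inclusion) such $T$.

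Next I would run the case analysis on $|T|$, tacitly using the convention (implicit in the statement) that $G$ has no isolated vertices. If $|T|\leq 1$, the non-isolated assumption forces some edge $\{i,j\}$ with $T\subseteq\{i,j\}$, so $T$ is not a cover. If $|T|=2$, writing $T=\{i,j\}$, the cover condition fails exactly when $\{i,j\}\in E(G)$, so $T$ is a cover iff $G|_T\cong\overline{K_2}$; any such $T$ is automatically inclusion-minimal by the $|T|\leq 1$ case. If $|T|=3$, the cover condition is automatic (no edge has three endpoints), and minimality amounts to saying no $2$-subset of $T$ is itself a cover; by the previous case, this means every pair inside $T$ is an edge, i.e., $G|_T\cong K_3$. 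Finally, if $|T|\geq 4$, any $3$-subset of $T$ is a cover, so $T$ cannot be minimal. This yields exactly the two families in the statement.

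For the ``in particular'' part, $\height I_c(G)=\min\{|T|:P_G(T)\in\Min I_c(G)\}$. If $G$ is noncomplete then some pair $\{i,j\}$ is a non-edge, producing a size-$2$ minimal prime; combined with the absence of size-$1$ covers, this gives $\height I_c(G)=2$, so $\dim S/I_c(G)=n-2$. By the Auslander--Buchsbaum formula applied to the finitely generated graded $S$-module $S/I_c(G)$, one has $\depth S/I_c(G)=n-\pd_S(S/I_c(G))=n-1-\pd I_c(G)$. Therefore $S/I_c(G)$ is Cohen--Macaulay iff $\depth S/I_c(G)=\dim S/I_c(G)=n-2$, which is equivalent to $\pd I_c(G)=1$.

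There is no serious obstacle: the entire argument is a bookkeeping on covers together with Auslander--Buchsbaum. The only delicate point is the need to exclude isolated vertices (else $(x_k)$ itself would be a minimal prime of height $1$, breaking both the classification and the height statement), which I will note explicitly as a standing assumption for the theorem.
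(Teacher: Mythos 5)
Your proposal is correct and follows essentially the same route as the paper: both reduce to the observation that a monomial prime $P_G(T)$ contains $I_c(G)$ exactly when $T$ is not contained in any edge, then identify the minimal such $T$ as non-edges or triangles (the paper phrases your case analysis as the dichotomy ``$T$ contains two nonadjacent vertices, or induces a clique of size at least $3$''), relying as you do on the implicit no-isolated-vertex assumption. Your explicit Auslander--Buchsbaum argument for the ``in particular'' clause is a detail the paper leaves to the reader, and it is carried out correctly.
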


\begin{proof}
   Let $T\subseteq [n]$. First suppose that $G|_{T}$ is isomorphic to $\overline{K_2}$. Then $I_c(G)\subseteq P_G(T)$ and since $G$ has no isolated vertex, $P_T$ is a minimal prime ideal of $I_c(G)$. Next suppose that $G|_{T}$ is isomorphic to $K_3$. Then $I_c(G)\subseteq P_G(T)$ and $P_G(T)$ is a minimal prime ideal of $I_c(G)$.

   Now, let $Q=(x_{i_1},\ldots, x_{i_q})$ be a prime ideal containing $I_c(G)$. Assume that $\{i_1,\ldots, i_q\}$ does not contain any two nonadjacnet vertices. Then, the induced subgraph of $G$ on $\{i_1,\ldots, i_q\}$ is a complete graph. In this case it is clear that $q\geq 3$. Thus, $\{i_1,\ldots, i_q\}$ contains the vertex set of a triangle in $G$, and hence the desired result follows. 
\end{proof}

 Recall that an ideal in $S$ is called {\em unmixed} if its minimal prime ideals are of the same height. A graph $G$ is called {\em triangle-free} if it does not have any induced subgraph isomorphic to $K_3$.  
 
 As an immediate consequence of Theorem~\ref{Min}, we have the following characterization for unmixed complementary edge ideals.
   
\begin{Corollary}\label{unmixed}
    Let $G$ be a graph. Then the following are equivalent:
\begin{enumerate}
    \item $I_c(G)$ is unmixed.
    \item $G$ is complete or triangle-free. 
\end{enumerate}
\end{Corollary}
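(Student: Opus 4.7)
The plan is to derive this as an immediate consequence of Theorem~\ref{Min}, which partitions $\Min(I_c(G))$ into two classes: the height-$3$ primes $P_G(T)$ with $G|_T \cong K_3$, and the height-$2$ primes $P_G(T)$ with $G|_T \cong \overline{K_2}$. Unmixedness of $I_c(G)$ is therefore equivalent to the non-occurrence of one of these two classes.

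For the implication $(2)\Rightarrow(1)$, I would split into two cases. If $G$ is complete, then $G$ contains no pair of nonadjacent vertices, so no minimal prime of the $\overline{K_2}$-type appears, and every minimal prime is of height $3$ (such primes exist because $n\geq 4$ forces the presence of a triangle in $K_n$). If $G$ is triangle-free, then symmetrically no $K_3$-type minimal prime occurs, and every minimal prime has height $2$. Either way $I_c(G)$ is unmixed.

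For $(1)\Rightarrow(2)$, I would argue by contraposition. Suppose $G$ is neither complete nor triangle-free. Being not complete, $G$ has two nonadjacent vertices $i,j$, so $T_1=\{i,j\}$ gives a minimal prime $P_G(T_1)$ of height $2$ by Theorem~\ref{Min}. Being not triangle-free, $G$ contains a $K_3$ on some set $T_2$, so $P_G(T_2)$ is a minimal prime of height $3$. Since the two heights differ, $I_c(G)$ is not unmixed.

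There is essentially no obstacle here; the only point to double-check is that the two families of minimal primes described in Theorem~\ref{Min} are genuinely witnessed whenever they are expected to be, which is automatic given the hypothesis $n\geq 4$ and the definitions of complete and triangle-free graphs.
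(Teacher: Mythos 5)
Your proof is correct and follows exactly the route the paper intends: the paper states the corollary as an immediate consequence of Theorem~\ref{Min}, and your argument simply makes explicit that the two families of minimal primes there have heights $3$ and $2$ respectively, so unmixedness amounts to the absence of one family, i.e.\ $G$ complete (no $\overline{K_2}$) or triangle-free (no $K_3$). The existence checks you flag (a triangle in $K_n$ for $n\geq 4$, a nonedge in a triangle-free graph on $n\geq 4$ vertices) are the right minor points and are handled correctly.
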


We denote the clique complex of a graph $G$ by $\Delta(G)$. Recall that each facet of $\Delta(G)$ is the vertex set of a maximal clique of $G$. Also, recall that the pure $k$-th skeleton of a simplicial complex $\Delta$, denoted by $\Delta^{[k]}$, is a simplicial complex whose facets are exactly the $k$-dimensional faces of $\Delta$. We denote the facet ideal of a simplex complex $\Delta$ by $I(\Delta)$, and the edge ideal of a graph $G$ by $I(G)$. 

The next corollary provides a combinatorial description for the Alexander dual of $I_c(G)$ in terms of the nonedges and triangles of $G$.    
    
\begin{Corollary}\label{dual}
    Let $G$ be a graph. Then the Alexander dual $I_c(G)^{\vee}$ of $I_c(G)$ is as follows:
\[
I_c(G)^{\vee}=I(\overline{G})+I({\Delta(G)}^{[2]}). 
\]
\end{Corollary}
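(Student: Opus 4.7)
The plan is to derive the formula by combining Theorem~\ref{Min} with the standard description of the Alexander dual in terms of the minimal primary decomposition. Recall that if $I$ is a squarefree monomial ideal with minimal primary decomposition $I=\bigcap_{i} P_G(F_i)$, then its Alexander dual is
\[
I^{\vee}=\Bigl(\prod_{j\in F_i} x_j \;:\; i\Bigr).
\]
So the proof reduces to transcribing the combinatorial description of $\Min(I_c(G))$ provided by Theorem~\ref{Min} into a list of monomial generators.

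By Theorem~\ref{Min}, the minimal primes of $I_c(G)$ are precisely the primes $P_G(T)$ where $T\subseteq [n]$ is either a pair of nonadjacent vertices of $G$ (i.e.\ $G|_T\cong \overline{K_2}$) or the vertex set of a triangle of $G$ (i.e.\ $G|_T\cong K_3$). First I would record that the first family contributes to $I_c(G)^{\vee}$ exactly the monomials $x_ix_j$ with $\{i,j\}\in E(\overline{G})$, whose span is by definition $I(\overline{G})$. Then I would record that the second family contributes the monomials $x_ix_jx_k$ where $\{i,j,k\}$ is the vertex set of a triangle in $G$; since the $2$-dimensional faces of $\Delta(G)$ are precisely the triangles of $G$, these monomials are exactly the facet generators of $\Delta(G)^{[2]}$, so their span is $I(\Delta(G)^{[2]})$.

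Combining the two contributions yields
\[
I_c(G)^{\vee}=I(\overline{G})+I(\Delta(G)^{[2]}),
\]
as claimed. There is no real obstacle here beyond carefully matching conventions: the Alexander dual formula turns each minimal prime into a single squarefree monomial generator, and Theorem~\ref{Min} already separates the minimal primes into the two combinatorially natural families (nonedges and triangles) that correspond exactly to the two summands on the right-hand side.
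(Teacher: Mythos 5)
Your argument is correct and is exactly the route the paper intends: Corollary~\ref{dual} is stated as an immediate consequence of Theorem~\ref{Min}, using the standard fact that the Alexander dual of a squarefree monomial ideal is generated by the monomials $\prod_{j\in T}x_j$ as $P_G(T)$ runs over the minimal primes, with the nonedge primes giving $I(\overline{G})$ and the triangle primes giving $I(\Delta(G)^{[2]})$. Your identification of the two families of generators with the two summands matches the paper's (unwritten) proof, so nothing further is needed.
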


Now, we provide the classification of all graphs for which  $I_c(G)^{\vee}$ is componentwise linear and equivalently $S/I_c(G)$ is sequentially Cohen--Macaulay. Recall that a graph is called a {\em chordal} graph if it has no induced cycle of length bigger than~3. For a squarefree monomial ideal $I$ in $S$, we denote by $I_{[j]}$ the monomial ideal generated by all squarefree monomials of degree~$j$ in $I$. 

\begin{Corollary}\label{componentwise}
    Let $G$ be a graph. Then the following are equivalent:
\begin{enumerate}
    \item $I_c(G)^{\vee}$ is componentwise linear.
    \item $S/I_c(G)$ is sequentially Cohen--Macaulay.
    \item $G$ is chordal. 
\end{enumerate}
\end{Corollary}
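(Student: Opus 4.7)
The plan is to reduce $(1) \Leftrightarrow (2)$ to a classical theorem, and then prove $(1) \Leftrightarrow (3)$ by analyzing each squarefree component of $I_c(G)^{\vee}$.

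The equivalence $(1) \Leftrightarrow (2)$ I would get for free from the Herzog--Hibi extension of the Eagon--Reiner theorem: for any squarefree monomial ideal $I \subseteq S$, the quotient $S/I$ is sequentially Cohen--Macaulay if and only if $I^{\vee}$ is componentwise linear. So the real work lies in proving $(1) \Leftrightarrow (3)$.

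By Corollary~\ref{dual}, the minimal generators of $I_c(G)^{\vee}$ sit in degrees $2$ and $3$: the degree-$2$ minimal generators $x_ix_j$ correspond to non-edges of $G$, while the degree-$3$ minimal generators $x_ix_jx_k$ correspond to triangles of $G$. The first thing I would record is $(I_c(G)^{\vee})_{[2]}=I(\overline{G})$. Next I would compute $(I_c(G)^{\vee})_{[j]}$ for every $j\geq 3$: given any $j$-subset $T\subseteq [n]$, either $T$ contains a non-edge of $G$, in which case $\prod_{i\in T} x_i$ is divisible by a degree-$2$ generator of $I_c(G)^{\vee}$, or $G|_T$ is a clique, in which case (since $|T|\geq 3$) $T$ contains a triangle and $\prod_{i\in T}x_i$ is divisible by a degree-$3$ generator. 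Hence $(I_c(G)^{\vee})_{[j]}$ equals the squarefree Veronese ideal $I_{n,j}$ for every $j\geq 3$, and such ideals are known to have a linear resolution (cf.~\cite{HHBook}).

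Componentwise linearity of $I_c(G)^{\vee}$ therefore reduces to the single condition that $(I_c(G)^{\vee})_{[2]}=I(\overline{G})$ has a linear resolution. By Fr\"oberg's theorem on edge ideals with linear resolution, this happens precisely when the complement of $\overline{G}$, namely $G$ itself, is chordal, giving $(1)\Leftrightarrow(3)$. I do not foresee any genuine obstacle here; the only subtle point is the combinatorial observation that the higher squarefree components of $I_c(G)^{\vee}$ are forced to be squarefree Veronese, after which chordality emerges as the sole missing ingredient via Fr\"oberg.
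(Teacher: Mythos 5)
Your proposal is correct and follows essentially the same route as the paper: both reduce $(1)\Leftrightarrow(2)$ to the Herzog--Hibi/Eagon--Reiner criterion, identify $(I_c(G)^{\vee})_{[2]}=I(\overline{G})$ via Corollary~\ref{dual}, observe that the higher squarefree components are squarefree Veronese ideals with linear resolution, and invoke Fr\"oberg's theorem to conclude chordality of $G$. Your explicit treatment of all components $(I_c(G)^{\vee})_{[j]}$ for $j\geq 3$ is slightly more detailed than the paper's (which only records the case $j=3$), but it is the same argument.
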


\begin{proof}
It is well--known that (1) and (2) are equivalent \cite[Theorem~8.2.20]{HHBook}. Now, we show that (1) and (3) are equivalent. For simplicity, let $I=I_c(G)^{\vee}$. Then, by Corollary~\ref{dual}, we have $I_{[2]}=I(\overline{G})$ which has linear resolution if and only if $G$ is chordal, by Fr\"oberg theorem \cite{F}. On the other hand, by Corollary~\ref{dual}, $I_{[3]}$ is the squarefree monomial ideal generated by all degree~3 squarefree monomials in $S$, and hence it is a squarefree Veronese ideal which has linear resolution. Thus, it follows from \cite[Proposition~8.2.17]{HHBook} that $I$ is componentwise linear if and only if $G$ is chordal, as desired.     
\end{proof}

Next, we provide the classification of all graphs for which $S/I_c(G)$ is Cohen--Macaulay. Recall that a graph is called a {\em forest} if it has no cycles, and a connected forest is called a {\em tree}. 

\begin{Corollary}\label{CM}
    Let $G$ be a graph. Then the following are equivalent:
\begin{enumerate}
    \item $S/I_c(G)$ is Cohen--Macaulay.
    \item $G$ is complete or a forest. 
\end{enumerate}
\end{Corollary}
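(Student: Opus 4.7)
The plan is to use the Eagon--Reiner theorem, which says that $S/I_c(G)$ is Cohen--Macaulay if and only if its Alexander dual $I_c(G)^{\vee}$ has a linear resolution. Combining this with the explicit description of the dual given in Corollary~\ref{dual}, namely
\[
I_c(G)^{\vee}=I(\overline{G})+I(\Delta(G)^{[2]}),
\]
the task reduces to determining when this sum of ideals admits a linear resolution.

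Since $I(\overline{G})$ is generated in degree $2$ while $I(\Delta(G)^{[2]})$ is generated in degree $3$, a necessary condition for a linear resolution is that all minimal generators lie in a single degree. I would split into cases. First, if $G=K_n$ is complete, then $I(\overline{G})=0$ and $I_c(G)^{\vee}$ is the squarefree Veronese ideal generated by all squarefree monomials of degree $3$, which is known to have a linear resolution, giving (2)$\Rightarrow$(1) in that case. Second, if $G$ is not complete but contains a triangle $\{i,j,k\}$, then $I(\overline{G})$ contributes genuine degree-$2$ minimal generators, while $x_ix_jx_k$ is a minimal generator in degree $3$: it cannot be divisible by any generator of $I(\overline{G})$ since all three pairs from a triangle are edges of $G$, hence non-edges of $\overline{G}$ are not contained in $\{i,j,k\}$. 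So both degrees genuinely appear in the minimal generating set and a linear resolution is impossible. Third, if $G$ is triangle-free, then $\Delta(G)^{[2]}$ has no facets, so $I_c(G)^{\vee}=I(\overline{G})$, and by Fr\"oberg's theorem this has a linear resolution if and only if the complement of $\overline{G}$, i.e.\ $G$ itself, is chordal.

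Putting the cases together, if $S/I_c(G)$ is Cohen--Macaulay then either $G$ is complete, or $G$ is triangle-free and chordal. A triangle-free chordal graph cannot contain any cycle (any cycle of length at least $4$ would admit a chord that produces a shorter cycle, eventually forcing a triangle), so it is a forest. This proves (1)$\Rightarrow$(2). Conversely, if $G$ is complete the argument above applies, and if $G$ is a forest then $G$ is both triangle-free and chordal, so $I_c(G)^{\vee}=I(\overline{G})$ has a linear resolution by Fr\"oberg's theorem, yielding (2)$\Rightarrow$(1) by Eagon--Reiner.

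The only substantive point is the second case above: verifying that when $G$ is neither complete nor triangle-free, the degree-$3$ triangle generators of $I_c(G)^{\vee}$ cannot be absorbed by the degree-$2$ part, forcing mixed degrees in the minimal generating set and hence ruling out a linear resolution. Everything else is a direct appeal to Corollary~\ref{dual}, Fr\"oberg's theorem, and Eagon--Reiner.
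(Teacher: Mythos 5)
Your proof is correct, but it takes a different route from the paper. The paper deduces the corollary from two results it has already established: by the Francisco--Van Tuyl lemma, $S/I_c(G)$ is Cohen--Macaulay if and only if it is sequentially Cohen--Macaulay and $I_c(G)$ is unmixed, and these two conditions were characterized earlier as ``$G$ chordal'' (Corollary~\ref{componentwise}) and ``$G$ complete or triangle-free'' (Corollary~\ref{unmixed}); the conjunction is exactly ``complete or a forest.'' You instead apply Eagon--Reiner directly to the description $I_c(G)^{\vee}=I(\overline{G})+I(\Delta(G)^{[2]})$ from Corollary~\ref{dual}: when $G$ is neither complete nor triangle-free you observe that a nonedge gives a degree-$2$ minimal generator while a triangle gives a degree-$3$ minimal generator (no nonedge monomial can divide a triangle monomial), so the dual is not generated in a single degree and cannot have a linear resolution; when $G$ is triangle-free the dual reduces to $I(\overline{G})$ and Fr\"oberg's theorem turns linearity into chordality of $G$; the complete case is the squarefree Veronese ideal in degree $3$. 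Both arguments are sound and of comparable depth (the paper's route also ultimately rests on Eagon--Reiner-type duality and Fr\"oberg, hidden inside Corollary~\ref{componentwise}); the paper's version is shorter given its earlier corollaries, while yours is more self-contained, avoids the sequentially Cohen--Macaulay/unmixed detour, and makes explicit the concrete obstruction (a triangle coexisting with a nonedge forces mixed generator degrees in the dual). The one step worth stating carefully, as you note, is precisely that minimality of the triangle generator, which you verify correctly.
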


\begin{proof}
By \cite[Lemma~3.6]{FV}, $S/I_c(G)$ is Cohen--Macaulay if and only if $S/I_c(G)$ is sequentially Cohen--Macaulay and $I_c(G)$ is unmixed. This is the case if and only if $G$ is complete or a forest, by Corollary~\ref{unmixed} and Corollary~\ref{componentwise}, since the only triangle--free chordal graphs are forests.    
\end{proof}

Next, we determine when $S/I_c(G)$ is Gorenstein. 

\begin{Corollary}\label{Gorenstein}
    For a given graph $G$, the following are equivalent:
    \begin{enumerate}
    \item $S/I_c(G)$ is Gorenstein.
    \item $G$ is the disjoint union of two edges. 
\end{enumerate}
\end{Corollary}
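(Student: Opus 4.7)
My plan is to combine the Cohen--Macaulay classification from Corollary~\ref{CM} with the Hilbert--Burch theorem. First, I would use that Gorenstein implies Cohen--Macaulay, so Corollary~\ref{CM} reduces the problem to two cases: $G$ complete or $G$ a forest. The complete case can be dispatched immediately, since as recorded in the introduction $S/I_{n,n-2}$ is never Gorenstein for $n \geq 4$.

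Next, for the main case where $G$ is a forest, I would observe that Theorem~\ref{Min} gives $\height I_c(G) = 2$, and Cohen--Macaulayness combined with the Auslander--Buchsbaum formula yields $\pd_S(S/I_c(G)) = 2$. Thus $I_c(G)$ is a perfect ideal of grade $2$, and the Hilbert--Burch theorem furnishes a minimal free resolution of the shape
\[
0 \longrightarrow S^{m-1} \longrightarrow S^m \longrightarrow S \longrightarrow S/I_c(G) \longrightarrow 0,
\]
where $m = |E(G)|$ is the number of minimal generators of $I_c(G)$. In particular, the Cohen--Macaulay type of $S/I_c(G)$ equals $\beta_2(S/I_c(G)) = m - 1$, so Gorensteinness is equivalent to $m = 2$.

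It then remains to identify which forests have exactly two edges. Since $G$ has no isolated vertex and $n \geq 4$, the two edges cannot share a common vertex (otherwise only three vertices would be covered, leaving at least one isolated vertex), so they must be disjoint. Hence $G$ is the disjoint union of two edges on four vertices. The converse is immediate: if $G = 2K_2$, then up to relabeling $I_c(G) = (x_1x_2, x_3x_4)$ is a complete intersection of two coprime monomials, so $S/I_c(G)$ is Gorenstein.

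I do not anticipate any serious obstacle here. The main ingredient is the Hilbert--Burch theorem applied to a perfect grade-$2$ ideal, and the remaining combinatorial step is elementary. The only subtlety is confirming that the two edges must be disjoint, which is forced by the standing no-isolated-vertex assumption together with $n \geq 4$.
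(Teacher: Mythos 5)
Your proposal is correct and follows essentially the same route as the paper: Gorenstein forces Cohen--Macaulay, Corollary~\ref{CM} reduces to the complete or forest case, the complete case is excluded since $S/I_{n,n-2}$ is never Gorenstein, and Hilbert--Burch then shows the type is one less than the number of generators, forcing exactly two edges, which must be disjoint because $n\geq 4$ and $G$ has no isolated vertices. The only (inessential) difference is how you reach the codimension-two perfect situation: you use $\height I_c(G)=2$ from Theorem~\ref{Min} together with Auslander--Buchsbaum, whereas the paper derives $\pd(I_c(G))=1$ via Eagon--Reiner and the regularity/projective-dimension duality for the Alexander dual.
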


\begin{proof}
    (1) \implies (2) Suppose that $S/I_c(G)$ is Gorenstein, and hence $G$ is not complete. Then $S/I_c(G)$ is Cohen--Macaulay, and hence $I_c(G)^{\vee}$ has linear resolution by \cite{ER}. Thus, $\reg(I_c(G)^{\vee})=2$, by Corollary~\ref{dual} and Corollary~\ref{CM}. Therefore, by \cite[Proposition 8.1.10]{HHBook}, we have $\pd(I_c(G))=1$. Then, Hilbert-Burch theorem \cite[Lemma 9.2.4]{HHBook} implies that $I_c(G)$ is minimally generated by exactly two monomials, since $S/I_c(G)$ is Gorenstein and hence the last Betti number is equal to~$1$. Thus, $G$ has exactly two edges. Then $G$ is the disjoint union of two edges, since $n\geq 4$. 

    (2) \implies (1) If $G$ is the disjoint union of two edges, then $S/I_c(G)$ is complete intersection, and hence Gorenstein. 
\end{proof}


\section{Linear resolution}\label{sec2}

In this section, we characterize the complementary edge ideals with linear resolution. For this purpose, we recall some notion from \cite{BHZ}. Let $I$ be a monomial ideal in $S$ generated in degree~$d$. The graph $G_I$ associated to $I$ is defined as a graph whose vertex set is just the minimal set of monomial generators of $G$, denoted by $\mathcal{G}(I)$ and its edge set is
	\[
	E(G_I)=\big{\{}\{u,v\}: u,v\in \mathcal{G}(I)~\text{with}~\deg (\lcm(u,v))=d+1 \big{\}}.
	\]
	For any $u, v\in \mathcal{G}(I)$, we denote by  $G^{(u,v)}_I$ the induced subgraph of $G_I$ on the vertex set
	\[
	V(G^{(u,v)}_I)=\{w\in \mathcal{G}(I)\: \text{$w$ divides $\lcm(u, v)$}\}.
	\]
	
	\medskip
	Let $I\subset S$ be a graded ideal generated in a single degree. Then, $I$ is said to be {\em linearly related}, if the first syzygy module of $I$ is generated by linear relations. 
    
\medskip 
We benefit from the following result to prove the next theorem.
	
	\begin{Theorem} \label{lcm graph}
		\cite[Corollary~2.2]{BHZ}
        Let $I$ be a monomial ideal generated in degree $d$. Then the following are equivalent:
        \begin{enumerate}
            \item $I$ is linearly related.
            \item for any $u,v\in \mathcal{G}(I)$, there exists a path in $G^{(u,v)}_I$ connecting $u$ and $v$.	
        \end{enumerate}  
	\end{Theorem}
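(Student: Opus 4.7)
The plan is to analyse the first syzygy module $Z_1(I)$ multidegree by multidegree via Taylor-type syzygies, identify its linear strand, and translate the resulting condition into connectivity of the graphs $G_I^{(u,v)}$.

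First, I would recall that $Z_1(I)$ is generated as an $S$-module by the Taylor syzygies
\[
\tau_{u,v} = \frac{\lcm(u,v)}{u}\, e_u - \frac{\lcm(u,v)}{v}\, e_v, \qquad u,v \in \mathcal{G}(I),
\]
where $\tau_{u,v}$ lies in multidegree $\lcm(u,v)$ and total degree $\deg(\lcm(u,v))$. Thus $\tau_{u,v}$ is linear, i.e.\ of degree $d+1$, precisely when $\{u,v\} \in E(G_I)$.

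Next, I would fix an arbitrary monomial $w$ and describe the multidegree-$w$ piece of $Z_1(I)$. Setting $V^{(w)} = \{u \in \mathcal{G}(I) : u \mid w\}$, a standard multigraded computation identifies
\[
Z_1(I)_w \ \cong\ \Bigl\{(\lambda_u)_{u \in V^{(w)}} \in \KK^{V^{(w)}} \ :\ \sum_{u} \lambda_u = 0\Bigr\}
\]
via $(\lambda_u) \mapsto \sum \lambda_u (w/u) e_u$, so it has $\KK$-dimension $|V^{(w)}|-1$. The submodule $L \subseteq Z_1(I)$ generated by the linear syzygies meets multidegree $w$ in the span of the elements $(w/\lcm(u',v'))\,\tau_{u',v'}$ with $\{u',v'\} \in E(G_I)$ and $u', v' \mid w$; under the above identification these correspond precisely to the vectors $e_{u'} - e_{v'}$ for $\{u',v'\} \in E(G_I|_{V^{(w)}})$. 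By a standard fact from algebraic graph theory (equivalently, the rank formula for the graph Laplacian), such vectors span the full subspace $\{\sum \lambda_u = 0\}$ if and only if the induced subgraph $G_I|_{V^{(w)}}$ is connected. Therefore $I$ is linearly related if and only if $G_I|_{V^{(w)}}$ is connected for every monomial $w$ with $|V^{(w)}| \geq 2$.

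Finally, I would reduce from all such $w$ to the special ones of the form $w = \lcm(u,v)$. Assuming condition (2), any $u, u' \in V^{(w)}$ satisfy $\lcm(u,u') \mid w$, so $G_I^{(u,u')}$ is an induced subgraph of $G_I|_{V^{(w)}}$ and the hypothesised $u$-$u'$ path in $G_I^{(u,u')}$ already lies inside $G_I|_{V^{(w)}}$; this yields connectivity in every relevant multidegree. Conversely, applying the connectivity criterion to $w = \lcm(u,v)$ recovers condition (2). The main obstacle is the middle step, namely correctly matching the abstract multigraded description of $Z_1(I)_w$ with the image of the linear syzygies and identifying the latter with the edge-indicator vectors of $G_I|_{V^{(w)}}$; once this dictionary is in place, the remainder reduces to standard linear algebra and elementary graph theory.
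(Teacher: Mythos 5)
Your argument is correct. Note, however, that the paper does not prove this statement at all: it is quoted verbatim from \cite[Corollary~2.2]{BHZ}, so there is no internal proof to compare with. Your proposal supplies a legitimate self-contained proof, and it follows the natural multigraded route that underlies the cited result: identify $Z_1(I)_w$ with the sum-zero subspace of $\KK^{V^{(w)}}$, observe that the linear syzygies contribute exactly the edge vectors $e_{u'}-e_{v'}$ of the induced subgraph of $G_I$ on $V^{(w)}$, use the elementary fact that these span the sum-zero subspace if and only if that induced subgraph is connected, and then reduce from arbitrary multidegrees $w$ to $w=\lcm(u,v)$ via the inclusion $V^{(\lcm(u,u'))}\subseteq V^{(w)}$ for $u,u'\in V^{(w)}$. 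All of these steps check out, including the two reductions at the end. One small point worth making explicit: for ``linearly related'' to be equivalent to $L=Z_1(I)$ with $L$ generated by the linear Taylor syzygies, you should note that every multihomogeneous first syzygy of total degree $d+1$ is already a combination of the $\tau_{u,v}$ with $\{u,v\}\in E(G_I)$; this is immediate from your own multidegree computation, since for distinct $u,v\in V^{(w)}$ with $\deg w=d+1$ one has $\lcm(u,v)=w$, so the degree-$(d+1)$ component of $Z_1(I)$ in multidegree $w$ is spanned by such $\tau_{u,v}$ (the complete graph on $V^{(w)}$ being connected). With that one-sentence clarification, the proof is complete and, modulo presentation, is essentially the syzygy-component analysis carried out in \cite{BHZ}.
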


Let $I$ be a monomial ideal in $S$. Then $I$ is said to have {\em linear quotients} if the minimal monomial generators of $I$ can be ordered $u_1, \ldots, u_r$ such that $(u_1, \ldots, u_{i-1}):u_i$ is generated by variables for each $i=2, \ldots, r$. Such an ordering for the elements of $\mathcal{G}(I)$ is called a {\em linear quotient order}. Given a monomial $u$ in $S$, we denote the set of variables which divide $u$ by $\supp(u)$. Also, given monomials $u,v\in S$, we set $u:v=\lcm(u,v)/v$. Let $i$ be a vertex in $G$. Then neighborhood of $i$ in $G$, denoted by $N_G(i)$ is the set of all vertices adjacent to $i$. 

\medskip 
Now we present the main theorem of this section.

\begin{Theorem}\label{linear}
    Let $G$ be a graph. Then the following are equivalent:
\begin{enumerate}
\item $I_c(G)$ has linear quotients. 
    \item $I_c(G)$ has linear resolution.
    \item $I_c(G)$ is linearly related.
    \item $G$ is connected. 
\end{enumerate}
\end{Theorem}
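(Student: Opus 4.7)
The plan is to establish the cycle $(1) \Rightarrow (2) \Rightarrow (3) \Rightarrow (4) \Rightarrow (1)$. The first two implications are standard: for monomial ideals generated in a single degree, linear quotients yield a linear resolution, and a linear resolution forces the first syzygy module to be generated in linear degree, which is precisely the linearly related condition. All the content therefore lies in $(3) \Rightarrow (4)$ and $(4) \Rightarrow (1)$.

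For $(3) \Rightarrow (4)$, I would first observe that the lcm graph $G_I$ with $I = I_c(G)$ coincides with the line graph $L(G)$. Labeling generators by edges via $u_e = x_1 \cdots x_n / (x_i x_j)$ for $e = \{i,j\}$, a direct computation gives $\lcm(u_e, u_f) = x_1 \cdots x_n / \prod_{i \in e \cap f} x_i$, of degree $n - |e \cap f|$. Since the generating degree is $n-2$, adjacency in $G_I$ is equivalent to $|e \cap f| = 1$. Now suppose $G$ is disconnected and pick edges $e, f$ in different components. Then $e \cap f = \emptyset$ forces $\lcm(u_e, u_f) = x_1 \cdots x_n$, which is divisible by every generator, so $G_I^{(u_e, u_f)} = G_I = L(G)$. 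Any path from $u_e$ to $u_f$ in $L(G)$ would produce a connected subgraph of $G$ containing both $e$ and $f$, contradicting the choice of components. By Theorem~\ref{lcm graph}, $I_c(G)$ is not linearly related, contradicting $(3)$.

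For $(4) \Rightarrow (1)$, I would build a linear quotient order by growing a connected subgraph one edge at a time: pick $e_1 \in E(G)$ arbitrarily, and at step $k \geq 2$ choose $e_k \in E(G) \setminus \{e_1, \ldots, e_{k-1}\}$ sharing a vertex with some $e_l$ ($l<k$); connectedness of $G$ guarantees the process continues until $E(G)$ is exhausted. Writing $e_k = \{s,t\}$, a direct computation yields
\[
u_{e_j} : u_{e_k} = \begin{cases} x_t & \text{if } e_j \cap e_k = \{s\}, \\ x_s & \text{if } e_j \cap e_k = \{t\}, \\ x_s x_t & \text{if } e_j \cap e_k = \emptyset, \end{cases}
\]
so $(u_{e_1}, \ldots, u_{e_{k-1}}) : u_{e_k}$ is generated by a subset of $\{x_s, x_t, x_s x_t\}$. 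By the chosen order, some earlier $e_l$ shares a vertex with $e_k$, contributing $x_s$ or $x_t$, which absorbs the quadratic $x_s x_t$; hence the colon ideal is generated by variables, proving linear quotients. The main subtlety is the choice of ordering: arbitrary orderings fail (e.g.\ $\{1,2\}, \{3,4\}, \{2,3\}$ on the path $P_4$ already produces $x_3 x_4$ in the first colon ideal), and connectedness is used exactly to guarantee the "local attachment" of each new edge to the prior subgraph.
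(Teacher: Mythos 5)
Your proposal is correct and follows essentially the same route as the paper: the same reduction to standard implications for $(1)\Rightarrow(2)\Rightarrow(3)$, the same identification of the lcm graph $G_{I_c(G)}$ with the line graph of $G$ combined with Theorem~\ref{lcm graph} for $(3)\Leftrightarrow(4)$ (you phrase it contrapositively, the paper directly), and the same linear-quotients construction for $(4)\Rightarrow(1)$, where your edge-by-edge growth of a connected subgraph is just a mild streamlining of the paper's vertex-by-vertex ordering, with the identical absorption argument disposing of the quadratic colons $x_sx_t$.
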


\begin{proof}
  (1)\implies (2) \implies (3) are well--known. 
  

(3) $\implies$ (4) 
Recall that in $I = I_c(G)$, each minimal generator corresponds to an edge $e = \{i,j\}$ of $G$ via the monomial $u_e = x_1 \cdots x_n/x_i x_j$.  For $u_e, u_f \in \mathcal{G}(I)$, the definition of $G_I$ shows that $u_e$ and $u_f$ are adjacent in $G_I$ if and only if  $\supp(u_e)$ and $\supp(u_f)$ differ in exactly one variable. Equivalently, the corresponding edges $e$ and $f$ in $G$ share a vertex. 
Since $I$ is linearly related, Theorem~\ref{lcm graph} implies that for any $u_e, u_f \in \mathcal{G}(I)$ there exists a path in $G^{(u_e,u_f)}_I$ connecting $u_e$ to $u_f$.  Hence, given any two edges $e$ and $f$ of $G$, there are edges $e_1, e_2, \ldots, e_r$ of $G$ such that $u_e, u_{e_1}, \ldots, u_{e_r},u_f$ form a path in $G^{(u_e,u_f)}_I$. By the adjacency correspondence above, $e, e_1, \ldots, e_r, f$ is a path in $G$ connecting $e$ and $f$. This proves that $G$ is connected.



(4) \implies (1)  We now describe an explicit ordering of the generators of $I_c(G)$ which yields linear quotients. Pick a vertex $a \in V(G)$, and let $E(a) = \{ e_1, \ldots, e_{r} \}$ be the set of edges of $G$ incident with $a$. We place the corresponding monomials $u_{e_1}, \ldots, u_{e_r}$ first in our order. Since, $\supp(u_{e_i})$ and $\supp(u_{e_{j}})$ differ by each other at one variable for all $1\leq i < j \leq r$, it follows that $u_{e_1}, \ldots, u_{e_r}$ is a linear quotient order. If $E(G)=E(a)$, the proof is complete. Otherwise, choose a vertex $b \in N_G(a)$, where $N_G(a)$ denotes the neighbor set of $a$. Let $E(b) = \{ f_1, \ldots, f_{s} \}$ be the set of edges incident with $b$, excluding $\{a,b\}$. We place the corresponding monomials $u_{f_1}, \ldots, u_{f_{s}}$ next in the order. 
Suppose we have already listed all monomials corresponding to edges incident with vertices in a set $T \subseteq V(G)$.  
If there remain edges not yet listed, then due to connectivity of $G$, there exists a vertex $c$ which is neighbor of some vertex in $T$ such that at least one incident edge of $c$ has not yet been listed. Choose such a vertex $c$. List the monomials corresponding to all such edges (excluding those whose other endpoint already lies in $T$).  Add $c$ to $T$ and repeat until all edges have been exhausted.

\medskip
This process produces an ordering of elements of $\mathcal{G}(I_c(G))$. Next we show that this ordering is a linear quotient ordering. Let $u_e$ and $u_f$ be two monomials in $\mathcal{G}(I_c(G))$ such that $u_e$ appears before $u_f$. 

First assume that $e$ and $f$ are adjacent and let $\ell$ be the vertex incident to $f$ and non-incident to $e$. Then $u_{e}: u_{f}=x_\ell$ as required. 

Next assume that $e$ and $f$ are disjoint. Let $e=\{i,j\}$ and $f=\{k, \ell \}$. Then $u_{e}: u_{f}=x_k x_\ell$. Due to our choice of ordering, there exists some edge $g$ such that in our ordering $u_g$ is listed before $u_f$ and $g$ is adjacent to $f$. Let $g=\{k, m\}$. Then  $(u_{g}): u_{f}=x_\ell$, as required. 
\end{proof}


\section{Betti numbers of forests}\label{sec3}

In this section, we investigate the Betti numbers of the complementary edge ideals and characterize all graphs $G$ for which $S/I_c(G)$ is level or $I_c(G)$ has pure resolution. 

\begin{Proposition}\label{nonvanishing Betti numbers}
 Let $G$ be a graph. If $\beta_{i,j}(I_c(G))\neq 0$, then 
 \[
 (i,j)\in \{(0,n-2), (1,n-1), (1,n), (2,n)\}.
 \]
\end{Proposition}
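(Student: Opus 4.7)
My plan is to combine two standard facts about graded monomial resolutions with a direct combinatorial computation of the possible degrees of least common multiples of the minimal generators of $I_c(G)$.

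First, I would record that $I_c(G)$ is generated in the single degree $n-2$. A basic consequence of the minimality of a graded free resolution is that if an ideal is generated in degree $d$, then $\beta_{i,j}=0$ whenever $j<d+i$, because the minimum shift must strictly increase with each homological step. Applied to $I=I_c(G)$, this forces $j\geq n-2+i$ whenever $\beta_{i,j}(I)\neq 0$.

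Second, I would use the multigraded structure: since $I_c(G)$ is a monomial ideal, the multidegrees of the shifts in its minimal free resolution lie among the multidegrees of the Taylor complex, that is, they are of the form $\lcm(u_{e_1},\dots,u_{e_k})$ for some subset $\{e_1,\dots,e_k\}\subseteq E(G)$. Now each minimal generator of $I_c(G)$ has the form $u_e=\prod_{k\notin e}x_k$ and hence support $[n]\setminus e$. Consequently, for any $E'\subseteq E(G)$ one has
\[
\supp\bigl(\lcm(u_e : e\in E')\bigr)=[n]\setminus \bigcap_{e\in E'} e.
\]
Because every edge has exactly two vertices, the intersection $\bigcap_{e\in E'}e$ has cardinality $0$, $1$, or $2$, so the total degree of every such $\lcm$ belongs to $\{n-2,\,n-1,\,n\}$.

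Finally, I would intersect the two constraints $j\geq n-2+i$ and $j\in\{n-2,n-1,n\}$. For $i=0$ this forces $j=n-2$; for $i=1$, $j\in\{n-1,n\}$; for $i=2$, $j=n$; and for $i\geq 3$, the condition $j\geq n+1$ leaves no feasible value of $j$, so all higher Betti numbers vanish. This yields exactly the four pairs claimed. I do not anticipate a serious obstacle: the only conceptual step is the observation that the $\lcm$ of any set of generators of $I_c(G)$ can omit at most two variables, which instantaneously bounds the projective dimension by $2$ and pins down the possible shifts.
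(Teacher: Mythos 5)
Your argument is correct and follows essentially the same two-constraint scheme as the paper: a lower bound $j\geq n-2+i$ coming from generation in the single degree $n-2$, together with an upper bound $j\leq n$ on the shifts, which the paper obtains from Hochster's formula (squarefree multidegrees) and you obtain, equally validly, from the lcm-support computation via the Taylor complex. One small bookkeeping point: for $i=0$ the conclusion $j=n-2$ follows from single-degree generation itself (which you did record), not from intersecting your two displayed constraints, which by themselves would still permit $j\in\{n-1,n\}$.
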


\begin{proof}
 Note that by Hochster's formula, the maximum possible shift in the minimal graded free resolution of $I_c(G)$ is $n$. Since $I_c(G)$ is generated in degree~$n-2$, the possible non--vanishing graded Betti numbers of $I_c(G)$ are $\beta_{0,n-2}$, $\beta_{1,n-1}$, $\beta_{1,n}$ and $\beta_{2,n}$.    
\end{proof}

The non--vanishing graded Betti numbers of the complementary edge ideal of a tree are known by Theorem~\ref{linear} and Corollary~\ref{CM}. As a next natural case, we consider disconnected forests in the following. For this purpose, we need the description of the Stanley--Reisner simplicial complex of $I_c(G)$. We define the simplicial complex $\Gamma_G$ to be the simplicial complex whose set of facets is as follows:
\[
\big{\{}[n]-\{i,j\}: i,j\in [n], \{i,j\}\notin E(G)\big{\}}\cup \big{\{}[n]-\{i,j,k\}:  \{i,j,k\}\in {\Delta(G)}^{[2]}\big{\}} 
\]
Then, it can be seen that the Stanley--Reisner ideal of $\Gamma_G$ is $I_c(G$).

\medskip
Beside the Stanley--Reisner simplicial complex of $I_c(G)$, the simplicial complex whose facets correspond to the minimal generators of $I_c(G)$ is also useful for us. We denote this simplicial complex by $\Lambda_G$. Then $I_c(G)$ is the facet ideal of $\Lambda_G$, namely $I_c(G)=I(\Lambda_G)$. 

We are going to use a sufficient condition for non--vanishing Betti numbers of  facet ideals given in \cite{EF}. For this, we need to recall the notion of a special type of covers for facets.   

Let $\Delta$ be a simplicial complex. Then, a subset $\mathcal{C}$ of facets of $\Delta$ is called a {\em facet cover} of $\Delta$ if every vertex $i$ of $\Delta$ belongs to some facet $F$ in $\mathcal{C}$. Moreover, a facet cover is called \emph{minimal} if no proper subset of it is a facet cover of $\Delta$. A sequence $F_1,\ldots,F_k$ of facets of $\Delta$ is called a {\em well-ordered facet cover} if $\{F_1,\ldots,F_k\}$ is a minimal facet cover of $\Delta$ and for every facet $H\notin \{F_1,\ldots,F_k\}$ of $\Delta$ there exists $i\leq k-1$ with $F_i \subseteq H \cup F_{i+1} \cup F_{i+2}\cup \cdots \cup F_k.$

A {\em subcollection} of $\Delta$ is a simplicial complex whose facets are also facets of $\Delta$. Let $W\subseteq V(\Delta)$. Then an \emph{induced subcollection} $\Delta^W$ of $\Delta$ on $W$ is the simplicial complex whose facets are those facets of $\Delta$ which are contained in $W$. If $u$ is a squarefree monomial in $S$, then we denote by $\Delta^u$ the induced subcollection $\Delta^{\supp(u)}$ of $\Delta$. As usual, we denote the {\em induced subcomplex} of $\Delta$ on $W$ by $\Delta_W$. 

\begin{Theorem}\label{cover}
\cite[Corollary 3.4]{EF}
	Let $\Delta$ be a simplicial complex and let $u$ be a squarefree monomial. If $\Delta^{u}$ has a well--ordered facet cover of cardinality $i$, then $\beta_{i-1,u}(I(\Delta)) \neq 0$.
\end{Theorem}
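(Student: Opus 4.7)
The plan is to reduce, via Hochster's formula and combinatorial Alexander duality, to a topological non--vanishing statement about a simplicial complex built from $\Delta^{u}$, and then realize a nontrivial homology class using the given cover.

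First I would observe that $\beta_{i-1,u}(I(\Delta))=\beta_{i-1,u}(I(\Delta^{u}))$, since the multigraded Tor at multidegree $u$ sees only the minimal generators of $I(\Delta)$ dividing $u$, i.e., the monomials $x_{F}$ with $F$ a facet of $\Delta$ contained in $\supp(u)$; these are precisely the facets of $\Delta^{u}$. So we may replace $\Delta$ by $\Delta^{u}$ and work on the ground set $\supp(u)$, setting $d:=|\supp(u)|$. Write $I(\Delta^{u})=I_{\Sigma}$ for the Stanley--Reisner complex $\Sigma$ on $\supp(u)$. Hochster's formula combined with $\beta_{i-1,u}(I(\Delta^{u}))=\beta_{i,u}(S/I(\Delta^{u}))$ gives $\beta_{i-1,u}(I(\Delta^{u}))=\dim_{\KK}\tilde H_{d-i-1}(\Sigma;\KK)$. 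By combinatorial Alexander duality relative to $\supp(u)$, this equals $\dim_{\KK}\tilde H_{i-2}(\Sigma^{\vee};\KK)$, where $\Sigma^{\vee}$ is the simplicial complex on $\supp(u)$ with facets $\{\supp(u)\setminus F : F \text{ a facet of } \Delta^{u}\}$.

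Next I would apply the nerve lemma to the cover of $\Sigma^{\vee}$ by its facets, which are simplices and hence closed under non--empty intersection, obtaining $\Sigma^{\vee}\simeq N$, where $N$ is the simplicial complex on the facets of $\Delta^{u}$ whose simplices are the subsets $\{F_{j_{1}},\ldots,F_{j_{r}}\}$ with $F_{j_{1}}\cup\cdots\cup F_{j_{r}}\neq \supp(u)$, i.e., the non--covers. Hence minimal facet covers of $\Delta^{u}$ correspond to minimal non--faces of $N$. The hypothesized minimal cover $\{F_{1},\ldots,F_{i}\}$ therefore yields the $(i-2)$-cycle
\[
z:=\partial[F_{1},\ldots,F_{i}]=\sum_{k=1}^{i}(-1)^{k-1}[F_{1},\ldots,\widehat{F_{k}},\ldots,F_{i}]
\]
in the chain complex of $N$, and it suffices to prove $[z]\neq 0$ in $\tilde H_{i-2}(N;\KK)$.

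The final step exploits the well--ordering by induction on the number of facets of $\Delta^{u}$ lying outside $\{F_{1},\ldots,F_{i}\}$. When there are none, $N$ is the boundary of the $(i-1)$-simplex on $\{F_{1},\ldots,F_{i}\}$, an $(i-2)$-sphere, and $[z]$ generates $\tilde H_{i-2}(N;\KK)=\KK$. For the inductive step, pick an extra facet $H$, let $\Delta':=\Delta^{u}\setminus\{H\}$ (the well--ordering persists trivially, being a universal condition over non--cover facets), and compare $N_{\Delta^{u}}$ with $N_{\Delta'}$ through the star and link of the new vertex $H$. The well--ordering furnishes an index $j=j(H)$ such that $F_{j}\subseteq H\cup F_{j+1}\cup\cdots\cup F_{i}$; combined with $F_{1}\cup\cdots\cup F_{i}=\supp(u)$, this forces $\{F_{1},\ldots,\widehat{F_{j}},\ldots,F_{i},H\}$ to be a facet cover, hence a non--face of $N_{\Delta^{u}}$. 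This restricts exactly which new simplices are attached when $H$ is added, and I would use it to argue that the attaching happens along a subcomplex on which $[z]$ vanishes, so non--triviality of $[z]$ is preserved. The main obstacle is precisely this last step: verifying that the well--ordering is just strong enough to control the attaching map (equivalently, the connecting homomorphism in the long exact sequence of $\Tor$ arising from $0\to I(\Delta')\to I(\Delta^{u})\to I(\Delta^{u})/I(\Delta')\to 0$) so that the cycle pulls back to a non--trivial class. I expect the inductive hypothesis, applied with the sequence $F_{1},\ldots,F_{i},H$ placed in the natural well--ordered position, to deliver exactly this.
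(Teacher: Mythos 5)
The paper does not prove this statement at all---it is quoted from \cite[Corollary 3.4]{EF}---so the only question is whether your argument is complete, and it is not. Your reductions are correct and do bring the theorem down to a concrete topological claim: restriction to the multidegree $u$ gives $\beta_{i-1,u}(I(\Delta))=\beta_{i-1,u}(I(\Delta^u))$, Hochster's formula together with combinatorial Alexander duality identifies this with $\dim_{\KK}\tilde H_{i-2}(\Sigma^{\vee};\KK)$ for the complex $\Sigma^{\vee}$ generated by the complements of the facets of $\Delta^u$, and the nerve lemma legitimately replaces $\Sigma^{\vee}$ by the non--cover complex $N$; moreover, minimality of the cover does make $z=\partial[F_1,\ldots,F_i]$ an $(i-2)$-cycle of $N$, and your base case is right.

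The genuine gap is the inductive step, which is exactly where the whole content of the theorem sits, and you explicitly leave it as an expectation rather than a proof. Writing $N=(N\setminus H)\cup\overline{\st}_N(H)$, the class $[z]$ dies in $N$ precisely when $z$ is homologous, inside $N\setminus H$, to a cycle supported on $\lk_N(H)$; all the well--ordering hands you directly is that the single set $\{F_1,\ldots,\widehat{F_j},\ldots,F_i,H\}$ is a non--face of $N$, i.e.\ one $(i-2)$-face is absent from $\lk_N(H)$, and no argument is given that this (even combined with the inductive hypothesis) blocks such a homology. That this step cannot be waved away follows from the fact that minimality of the cover alone is insufficient: for the $6$-cycle $C_6$, a perfect matching is a minimal facet cover of cardinality $3$, yet $\beta_{2,x_1\cdots x_6}(I(C_6))=0$ because the corresponding Stanley--Reisner (independence) complex has vanishing $\tilde H_2$; consistently, no ordering makes that matching well--ordered. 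So whatever closes your induction must use the well--ordering hypothesis in an essential, quantitative way, and that argument---the heart of Erey--Faridi's result---is missing from your proposal.
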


In the next theorem we determine all non--vanishing graded Betti numbers of complementary edge ideals of disconnected forests. 

\begin{Theorem}\label{Betti numbers of forests}
   Let $G$ be a disconnected forest. Then the only non--vanishing graded Betti numbers of $I_c(G)$ are as follows:
   \begin{enumerate}
        \item $\beta_{0,n-2}(I_c(G))=n-c$, where $c$ is the number of connected components of $G$. 
       \item $\beta_{1,n}(I_c(G))\neq 0$. 
       \item $\beta_{1,n-1}(I_c(G))\neq 0$ if and only if $G$ is not a disjoint union of edges. 
   \end{enumerate} 
\end{Theorem}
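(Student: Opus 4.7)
The strategy is to first drastically restrict the candidate Betti numbers using the results of Sections~\ref{sec1} and~\ref{sec2}, and then handle the three parts individually. Since a disconnected forest on $n\geq 4$ vertices is never complete, Corollary~\ref{CM} tells us that $S/I_c(G)$ is Cohen--Macaulay, and Theorem~\ref{Min} then yields $\pd(I_c(G))=1$. Combined with Proposition~\ref{nonvanishing Betti numbers}, this forces $\beta_{2,n}(I_c(G))=0$, so only $\beta_{0,n-2}$, $\beta_{1,n-1}$, and $\beta_{1,n}$ remain as possibly non--vanishing graded Betti numbers of $I_c(G)$.

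Part (1) is immediate: $\beta_{0,n-2}(I_c(G))$ equals the number of minimal generators of $I_c(G)$, which is $|E(G)|=n-c$ for a forest on $n$ vertices with $c$ components. For Part (2), the plan is to use Theorem~\ref{linear}: the disconnectedness of $G$ rules out a linear resolution of $I_c(G)$, and with $\pd(I_c(G))=1$ the only graded Betti number providing non--linearity is $\beta_{1,n}$, which must therefore be non--zero.

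For the forward direction of Part (3) a quick observation suffices: if $G$ is a disjoint union of edges, then any two distinct minimal generators $u_e,u_f$ correspond to vertex--disjoint edges, so $\lcm(u_e,u_f)=x_1\cdots x_n$ has degree $n$, and hence every $\lcm$ of a subset of $\mathcal{G}(I_c(G))$ has degree $n$; since the minimal first syzygies of a monomial ideal are supported on such $\lcm$ multidegrees, no first syzygy can have degree $n-1$, giving $\beta_{1,n-1}(I_c(G))=0$. For the converse I would invoke Theorem~\ref{cover}. Assuming $G$ is not a disjoint union of edges and has no isolated vertex, pick $v\in V(G)$ with $|N_G(v)|\geq 2$ and two distinct neighbors $v_1,v_2$, and set $u=\prod_{i\neq v}x_i$, a squarefree monomial of degree $n-1$. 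The facets of $\Lambda_G^u$ are precisely $[n]\setminus\{v,w\}$ for $w\in N_G(v)$; take $F_k=[n]\setminus\{v,v_k\}$ for $k=1,2$. Since $v_1\neq v_2$, one has $F_1\cup F_2=\supp(u)$ while neither facet alone covers $\supp(u)$, so $\{F_1,F_2\}$ is a minimal facet cover. For any further facet $F=[n]\setminus\{v,w\}$ with $w\in N_G(v)\setminus\{v_1,v_2\}$, the union $F\cup F_2$ already equals $\supp(u)$, hence $F_1\subseteq F\cup F_2$ holds automatically. Theorem~\ref{cover} then yields $\beta_{1,u}(I_c(G))\neq 0$, whence $\beta_{1,n-1}(I_c(G))\neq 0$.

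The only genuinely technical step is the verification of the well--ordered cover in the converse of Part (3); restricting attention to the edges through $v$ makes the required containments essentially automatic, so I foresee no real obstacle beyond correctly identifying the facets of the induced subcollection $\Lambda_G^u$.
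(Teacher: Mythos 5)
Your proposal is correct, and three of its four components run exactly parallel to the paper: the reduction to the candidate degrees via Corollary~\ref{CM}, Theorem~\ref{Min} and Proposition~\ref{nonvanishing Betti numbers}, the count $\beta_{0,n-2}=n-c$, the deduction $\beta_{1,n}\neq 0$ from Theorem~\ref{linear}, and the converse of (3), where your choice of a degree--two vertex $v$ with neighbors $v_1,v_2$ and the well--ordered cover $\{[n]\setminus\{v,v_1\},[n]\setminus\{v,v_2\}\}$ in $(\Lambda_G)^u$ is the same construction as the paper's path $i$--$j$--$k$ argument via Theorem~\ref{cover}. The genuine difference is the forward direction of (3): the paper applies Hochster's formula to the Stanley--Reisner complex $\Gamma_G$ and shows that each restriction $(\Gamma_G)_W$ with $|W|=n-1$ is a cone, hence has vanishing reduced homology, whereas you observe that for a disjoint union of edges any two distinct generators have $\lcm$ equal to $x_1\cdots x_n$, so every multidegree in the lcm lattice coming from at least two generators has total degree $n$, and therefore no first syzygy can live in degree $n-1$. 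Your route is more elementary and shorter; it rests on the standard fact that the multidegrees of the minimal free resolution of a monomial ideal occur among the lcm's of subsets of its generators (the minimal resolution is a direct summand of the Taylor complex), which you should state with a reference and with the small precision that the relevant subsets have cardinality at least two (singletons give degree $n-2$, which is harmless). With that citation added, your argument is complete; the paper's topological argument, by contrast, generalizes more readily to situations where the pairwise lcm's do not all coincide.
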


\begin{proof}
Note that since $S/I_c(G)$ is Cohen-Macaulay, and hence $\pd(I_c(G))=1$, we have $\beta_{2,n}(I_c(G))=0$, and we only need to consider $\beta_{0,n-2}(I_c(G))$, $\beta_{1,n-1}(I_c(G))$ and $\beta_{1,n}(I_c(G))$ in the following:

   (1) Since a forest with $c$ connected components has $n-c$ edges, $I_c(G)$ is minimally generated by $n-c$ monomials, and hence $\beta_{0,n-2}(I_c(G))=n-c$.

   (2) Since $I_c(G)$ is not linearly related by Theorem~\ref{linear}, it follows that $\beta_{1,n}(I_c(G))\neq 0$, by Proposition~\ref{nonvanishing Betti numbers}.    

   (3) First suppose that $G$ is a disjoint union of edges. Then, we show that $\beta_{1,n-1}(I_c(G))\neq 0$. By Hochster's formula, we have
   \[
   \beta_{1,n-1}(I_c(G))=\sum_{W\subseteq [n],|W|=n-1} \dim_{\KK} {\Tilde{H}}_{n-4} ({(\Gamma_G)}_W;\KK). 
   \]
   Since $G$ has no triangles, the only facets of ${\Gamma}_G$ are the complements of edges of $G$. Let $G$ consist of the disjoint edges $e_1,e_2, \ldots,e_t$ with $t\geq 2$, and let $e_1=\{1,2\}$. By the symmetry of $G$, without loss of generality, we assume that $W=[n]-\{1\}$. We claim that $(\Gamma_G)_{W}$ is a cone with the apex $2$, i.e. all facets of $(\Gamma_G)_{W}$ contain the vertex~$2$.  
   For this, we distinguish three possible types of nonedges in $G$:
   \begin{enumerate}
       \item [(i)] $\{i,j\}\subseteq [n]$ with $\{i,j\}\notin E(G)$ and $i,j\notin \{1,2\}$: the complements of such nonedges contain the vertex~$2$, and hence after removing $1$ from them, they are faces of ${(\Gamma_G)}_W$ containing ~$2$. 
       \item [(ii)] $\{1,j\}$ with $j\in [n]$ and $\{1,j\}\notin E(G)$: the complements of such nonedges contain the vertex~$2$, and hence they are faces of ${(\Gamma_G)}_W$ containing ~$2$, since they do not contain 1.  
       \item [(iii)] $\{2,j\}$ with $j\in [n]$ and $\{2,j\}\notin E(G)$: after removing $1$ from the complements of such nonedges, they are contained in some faces of type~(ii) of ${(\Gamma_G)}_W$ which also contain ~$2$. 
   \end{enumerate}
   Therefore, the vertex~$2$ is contained in all the facets of ${(\Gamma_G)}_W$, and hence ${(\Gamma_G)}_W$ is a cone, as we claimed. Therefore, ${\Tilde{H}}_{n-4} ({(\Gamma_G)}_W;\KK)$ vanishes. This together with Hochster's formula implies that $\beta_{1,n-1}(I_c(G))=0$, as desired. 

   Conversely, suppose that $G$ is not a disjoint union of edges. Then $G$ has a connected component containing two distinct edges $\{i,j\}$ and $\{j,k\}$. Let $u=x_1\cdots \hat{x_j}\cdots x_n$ be the squarefree monomial of degree~$n-1$. We put $F_1=[n]-\{j,k\}$ and $F_2=[n]-\{i,j\}$. Then, $\{F_1,F_2\}$ is clearly a minimal facet cover of cardinality~$2$ for the simplicial complex $(\Lambda_G)^u$. Now, let $H\neq F_1, F_2$ be a facet of $(\Lambda_G)^u$. Then, $H=[n]-\{s,j\}$ where $\{s,j\}\in E(G)$ and $s\neq i, k$. Then, we have $F_1\subseteq H\cup F_2$, since $i\in H$. Thus, $\{F_1,F_2\}$ is also a well--covered facet cover of cardinality~$2$ for $(\Lambda_G)^u$. Therefore, we deduce by Theorem~\ref{cover} that $\beta_{1,n-1}(I_c(G))\neq 0$ which completes the proof.       
\end{proof}

 Let $I$ be a graded ideal in $S$. Recall that the minimal graded free resolution of $I$ is \emph{pure} if in all of its steps only one single shift degree appears. Now we give a characterization for complementary edge ideals with pure resolution.

\begin{Corollary}\label{Pure resolution}
    Let $G$ be a graph. Then the following are equivalent:
    \begin{enumerate}
        \item $I_c(G)$ has pure resolution.
        \item $G$ is connected or a disjoint union of edges. 
    \end{enumerate}
\end{Corollary}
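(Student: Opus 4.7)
The plan is to prove the two implications separately, relying on Theorem~\ref{linear} and on the strong constraint from Proposition~\ref{nonvanishing Betti numbers}, namely that the only candidate positions for non--vanishing graded Betti numbers of $I_c(G)$ are $(0,n-2),(1,n-1),(1,n)$ and $(2,n)$. Purity therefore reduces, at homological degree~$1$, to deciding which of $\beta_{1,n-1}(I_c(G))$ and $\beta_{1,n}(I_c(G))$ vanishes.

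The direction (2) $\Rightarrow$ (1) is quick. If $G$ is connected, Theorem~\ref{linear} gives a linear, hence pure, resolution. If $G$ is a disjoint union of edges, then $G$ is a disconnected forest; Corollary~\ref{CM} forces $\beta_{2,n}(I_c(G))=0$, and Theorem~\ref{Betti numbers of forests}(3) forces $\beta_{1,n-1}(I_c(G))=0$, so only the shifts $n-2$ and $n$ survive and the resolution is pure.

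For (1) $\Rightarrow$ (2), I would assume $I_c(G)$ has pure resolution with $G$ disconnected, and deduce that $G$ is a disjoint union of edges. Since $G$ is disconnected, Theorem~\ref{linear} says $I_c(G)$ is not linearly related, whence $\beta_{1,n}(I_c(G))\neq 0$ by Proposition~\ref{nonvanishing Betti numbers}; purity then forces $\beta_{1,n-1}(I_c(G))=0$. The key step is to show that $\beta_{1,n-1}(I_c(G))=0$ forces every vertex of $G$ to have degree at most~$1$; together with the standing hypothesis of no isolated vertices, this identifies $G$ as a disjoint union of edges. To achieve this, I would assume for contradiction that some vertex $j$ admits two distinct neighbors $i$ and $k$, set $u=x_1\cdots\hat{x_j}\cdots x_n$, and replay the ``converse'' portion of the proof of Theorem~\ref{Betti numbers of forests}(3): the facets of $(\Lambda_G)^u$ are precisely the sets $[n]-\{s,j\}$ with $\{s,j\}\in E(G)$, and $\{[n]-\{j,k\},\,[n]-\{i,j\}\}$ forms a well--ordered facet cover of cardinality~$2$, so Theorem~\ref{cover} gives $\beta_{1,n-1}(I_c(G))\neq 0$, the desired contradiction.

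The main obstacle is conceptual rather than computational: one must notice that the well--ordered facet cover construction from the converse part of Theorem~\ref{Betti numbers of forests}(3) depends only on the existence of two adjacent edges at a common vertex $j$, and not on $G$ being a forest or triangle--free. The facets of $(\Lambda_G)^u$ are determined entirely by the edges of $G$ incident with $j$, and the verifications that $\{F_1,F_2\}$ is minimal and well--ordered are insensitive to the rest of the graph. Recognising this lets the argument from the forest case transfer verbatim, and this is what drives the proof.
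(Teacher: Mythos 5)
Your proposal is correct and follows essentially the same route as the paper: both directions combine Theorem~\ref{linear}, Proposition~\ref{nonvanishing Betti numbers}, Corollary~\ref{CM} and Theorem~\ref{Betti numbers of forests}, with purity at homological degree~$1$ reducing to deciding which of $\beta_{1,n-1}(I_c(G))$ and $\beta_{1,n}(I_c(G))$ vanishes. Your one genuine addition is the explicit observation that the well--ordered facet cover $\{[n]-\{j,k\},\,[n]-\{i,j\}\}$ used in the converse of Theorem~\ref{Betti numbers of forests}(3) requires only a vertex $j$ with two distinct neighbors and not the forest hypothesis; this is precisely what (1)$\Rightarrow$(2) needs, since there the graph is only known to be disconnected, so your version makes airtight a citation that the paper applies slightly beyond the stated hypotheses of that theorem.
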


\begin{proof}
   (1)\implies (2) Suppose that $I_c(G)$ has pure resolution. If $G$ is not connected, then we show that it a disjoint union of edges. By Theorem~\ref{linear}, $I_c(G)$ does not have linear resolution, and hence $\beta_{1,n}(I_c(G))\neq 0$. Therefore, $\beta_{1,n-1}(I_c(G))=0$, since $I_c(G)$ has pure resolution. Thus, $G$ is a disjoint union of edges by Theorem~\ref{Betti numbers of forests}, and hence (2) follows. 

   (2)\implies (1) If $G$ is connected, then by Theorem~\ref{linear} $I_c(G)$ has linear resolution and hence it has pure resolution. If $G$ is a disjoint union of edges, then Propositon~\ref{nonvanishing Betti numbers} and Theorem~\ref{Betti numbers of forests}, imply that $I_c(G)$ has pure resolution.
\end{proof}

 Recall that $S/I$ is {\em level} if it is Cohen--Macaulay and the last step of the minimal graded free resolution of $S/I$ is pure. Next, we give a characterization for $S/I_c(G)$ with level property. Let $I$ be a graded ideal in $S$.

\begin{Corollary}\label{Level}
    Let $G$ be a graph. Then the following are equivalent:
    
    \begin{enumerate}
        \item $S/I_c(G)$ is level.
        \item $G$ is a complete graph, a tree or a disjoint union of edges. 
    \end{enumerate}

\end{Corollary}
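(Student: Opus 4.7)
The plan is to combine Corollary~\ref{CM} with the Betti number information from Theorem~\ref{linear} and Theorem~\ref{Betti numbers of forests}. Since levelness requires Cohen--Macaulayness, the first reduction is that $G$ must be complete or a forest. For each of the three families listed in (2), I will verify that the last step of the minimal graded free resolution of $S/I_c(G)$ carries only one shift, whereas for any other forest I will produce two distinct shifts in the last step.

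For the direction (2)~$\implies$~(1), I treat the three cases separately. When $G=K_n$, the ideal $I_c(G)$ is the squarefree Veronese $I_{n,n-2}$, whose levelness is recalled in the introduction. When $G$ is a tree, Theorem~\ref{linear} yields that $I_c(G)$ has linear resolution, so every homological step of its resolution carries a single shift; combined with the Cohen--Macaulayness from Corollary~\ref{CM}, this gives levelness of $S/I_c(G)$. When $G$ is a disjoint union of edges, Theorem~\ref{Min} and Corollary~\ref{CM} combine to give $\pd(I_c(G))=1$, so the last free module in the resolution of $S/I_c(G)$ is determined by the graded Betti numbers $\beta_{1,\cdot}(I_c(G))$; by Proposition~\ref{nonvanishing Betti numbers} and Theorem~\ref{Betti numbers of forests}(2)--(3), only $\beta_{1,n}(I_c(G))$ is nonzero among these, so the last step is pure.

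For the reverse implication, levelness forces Cohen--Macaulayness, so by Corollary~\ref{CM}, $G$ is complete or a forest. If $G$ were a disconnected forest other than a disjoint union of edges, then Theorem~\ref{Betti numbers of forests}(2)--(3) would guarantee that both $\beta_{1,n-1}(I_c(G))$ and $\beta_{1,n}(I_c(G))$ are nonzero, so the last step of the resolution of $S/I_c(G)$ would carry two distinct shifts $n-1$ and $n$, contradicting levelness. Hence $G$ is complete, a tree, or a disjoint union of edges. I do not foresee any serious obstacle: the argument is essentially an orchestration of Theorem~\ref{linear} and Theorem~\ref{Betti numbers of forests} with the Cohen--Macaulay classification, the only care being to identify the last free module of the resolution of $S/I_c(G)$, which corresponds to homological index~$1$ of the resolution of $I_c(G)$ whenever $G$ is a noncomplete Cohen--Macaulay graph.
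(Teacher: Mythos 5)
Your proposal is correct and follows essentially the same route as the paper: reduce to the Cohen--Macaulay classification of Corollary~\ref{CM}, then use Theorem~\ref{linear} and Theorem~\ref{Betti numbers of forests} to control the shifts in the last step (with $\pd(I_c(G))=1$ identifying that step). The only cosmetic difference is that for the disjoint-union-of-edges case you unpack the purity argument directly from Theorem~\ref{Betti numbers of forests}(2)--(3) instead of quoting Corollary~\ref{Pure resolution}, which the paper does.
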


\begin{proof}
    (1)\implies (2) Suppose that $S/I_c(G)$ is level. Then, $S/I_c(G)$ is Cohen--Macaulay, and hence $\pd(I_c(G))=1$ and $G$ is a complete graph or a forest by Corollary~\ref{CM}. If $G$ is disconnected but not a disjoint union of edges, then $\beta_{1,n-1}(I_c(G))\neq 0$ by Theorem~\ref{Betti numbers of forests}. On the other hand, by the same theorem, we have $\beta_{1,n}(I_c(G))\neq 0$ which contradicts purity of the minimal graded free resolution of $I_c(G)$, and hence levelness of $S/I_c(G)$. 

    (2)\implies (1) If $G$ is a complete graph, then $I_c(G)$ is a squarefree Veronese ideal, and hence $S/I_c(G)$ is level. If $G$ is a tree, then $I_c(G)$ has linear resolution by Theorem~\ref{linear}, and hence it has pure resolution, and also in this case $S/I_c(G)$ is Cohen--Macaulay by Corollary~\ref{CM}. Thus, $S/I_c(G)$ is level. If $G$ is a disjoint union of edges, then $S/I_c(G)$ is Cohen--Macaulay by Corollary~\ref{CM}, and $I_c(G)$ has pure resolution by Corollary~\ref{Pure resolution}, and hence $S/I_c(G)$ is level.     
\end{proof}


\section{Projective dimension and regularity}\label{sec4}

In this section, we consider all possible pairs $(\pd(I_c(G)), \reg(I_c(G)))$. To do this, we begin with the following proposition.  

\begin{Proposition}\label{pd and reg}
    Let $G$ be a graph. Then 
\[
1\leq \pd(I_c(G))\leq 2,
\]
\[
n-2\leq \reg(I_c(G))\leq n-1.
\]
\end{Proposition}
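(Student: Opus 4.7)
The plan is to deduce all four inequalities from the Betti-shift restriction established in Proposition~\ref{nonvanishing Betti numbers}, supplemented by the height computation in Theorem~\ref{Min} together with the Auslander--Buchsbaum formula for the single lower bound that requires further input.

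For the regularity, I would start by observing that since $I_c(G)$ is a nonzero ideal minimally generated in the single degree $n-2$, the Betti number $\beta_{0,n-2}(I_c(G))$ is nonzero, which immediately gives $\reg(I_c(G))\geq n-2$. For the upper bound, Proposition~\ref{nonvanishing Betti numbers} confines the bidegrees $(i,j)$ of potentially nonvanishing Betti numbers of $I_c(G)$ to the set $\{(0,n-2),(1,n-1),(1,n),(2,n)\}$; computing $j-i$ over this set produces the values $n-2,\ n-2,\ n-1,\ n-2$, so $\reg(I_c(G))\leq n-1$, with the maximum (when attained) occurring only at the bidegree $(1,n)$.

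For the projective dimension, the upper bound $\pd(I_c(G))\leq 2$ is immediate from the same proposition, as the homological index $i$ in each admissible bidegree is at most $2$. For the lower bound I would apply Auslander--Buchsbaum. If $G$ is noncomplete, Theorem~\ref{Min} gives $\height(I_c(G))=2$, so $\dim(S/I_c(G))=n-2$ and therefore $\depth(S/I_c(G))\leq n-2$; the formula then forces $\pd(S/I_c(G))\geq 2$, i.e.\ $\pd(I_c(G))\geq 1$. If $G$ is complete, then $I_c(G)$ is the squarefree Veronese $I_{n,n-2}$, whose minimal primes are generated by triples of variables, so $\height(I_c(G))=3$; combined with the Cohen--Macaulayness of $S/I_{n,n-2}$ recalled in the introduction, this yields $\pd(I_c(G))=2\geq 1$.

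I do not anticipate any substantive obstacle: the proposition amounts to a compact repackaging of results already in hand. The only point requiring a little care is splitting the lower bound for $\pd(I_c(G))$ according to whether $G$ is complete, since the height of $I_c(G)$ jumps from $2$ to $3$ for $G=K_n$; this jump only strengthens the lower bound rather than threatening it.
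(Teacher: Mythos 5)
Your proposal is correct; no step fails. Three of the four inequalities are obtained exactly as in the paper: $\pd(I_c(G))\leq 2$ and $\reg(I_c(G))\leq n-1$ by reading off the admissible bidegrees from Proposition~\ref{nonvanishing Betti numbers}, and $\reg(I_c(G))\geq n-2$ because $I_c(G)$ is a nonzero ideal generated in degree $n-2$. The only genuine divergence is the lower bound $\pd(I_c(G))\geq 1$. The paper handles it more elementarily: since $n\geq 4$ and $G$ has no isolated vertices, $G$ has at least two edges, so $I_c(G)$ has at least two minimal generators and therefore cannot be free, giving $\pd(I_c(G))\neq 0$ in one line. You instead compute $\height(I_c(G))$ (equal to $2$ for noncomplete $G$ by Theorem~\ref{Min}, and equal to $3$ for $G=K_n$, which indeed follows from the minimal-prime description in Theorem~\ref{Min} even though its ``in particular'' clause only addresses noncomplete graphs), bound $\depth(S/I_c(G))$ by $\dim(S/I_c(G))$, and apply Auslander--Buchsbaum, with a case split on completeness. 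This is valid but heavier machinery for the weakest of the four inequalities; its only extra payoff is pinning down $\pd(I_c(K_n))=2$ exactly. Note also that both routes ultimately rely on the same standing hypotheses ($n\geq 4$ and no isolated vertices): the paper uses them directly to get two generators, while your height computation inherits them through Theorem~\ref{Min}, whose conclusion $\height(I_c(G))=2$ would fail in the presence of an isolated vertex.
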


\begin{proof}
   It follows from Proposition~\ref{nonvanishing Betti numbers} that $\pd(I_c(G))\leq 2$. On the other hand, since $G$ has at least four vertices and no isolated vertices, it has at least two edges. Thus, $I_c(G)$ is minimally generated by at least two monomials, and hence $\pd(I_c(G))\neq 0$. Therefore, we have 
  \[
  1\leq \pd(I_c(G))\leq 2.
  \]
  According to the possible non--vanishing Betti numbers mentioned in Proposition\ref{nonvanishing Betti numbers}, it also follows that 
  \[
  n-2\leq \reg(I_c(G))\leq n-1.
  \]
\end{proof}

We conclude this paper, with the following classification theorem. 

\begin{Theorem}
    Let $G$ be a graph on $n$ vertices. Then 
    \begin{enumerate}
      \item $(\pd (I_c(G)), \reg(I_c(G)))= (1, n-2)$ if and only if $G$ is a tree or a complete graph. 
     \item $(\pd (I_c(G)), \reg(I_c(G)))= (1, n-1)$ if and only if $G$ is a disconnected forest. 
     \item $(\pd (I_c(G)), \reg(I_c(G)))= (2, n-2)$ if and only if $G$ is a connected noncomplete graph with at least one cycle. 
           \item $(\pd (I_c(G)), \reg(I_c(G)))= (2, n-1)$ if and only if $G$ is disconnected with at least one cycle. 
    \end{enumerate}
\end{Theorem}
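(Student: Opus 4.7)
The plan is to combine two independent dichotomies established in the previous sections and then intersect them. By Proposition~\ref{pd and reg}, we already know $\pd(I_c(G))\in\{1,2\}$ and $\reg(I_c(G))\in\{n-2,n-1\}$, so only the four pairs listed in the statement can arise; it suffices to match each pair with the correct graph class.

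First I would pin down the projective dimension. Since $\height(I_c(G))=2$ by Theorem~\ref{Min}, the Auslander--Buchsbaum formula forces $\pd(I_c(G))=1$ precisely when $S/I_c(G)$ is Cohen--Macaulay, which by Corollary~\ref{CM} happens iff $G$ is complete or a forest. Consequently $\pd(I_c(G))=2$ iff $G$ is a noncomplete graph containing at least one cycle.

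Next I would pin down the regularity. By Theorem~\ref{linear}, $\reg(I_c(G))=n-2$ (equivalently, $I_c(G)$ has linear resolution) iff $G$ is connected; otherwise the upper bound $\reg(I_c(G))\leq n-1$ from Proposition~\ref{pd and reg} forces $\reg(I_c(G))=n-1$.

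Finally I would intersect the two dichotomies. Case (1) crosses ``complete or forest'' with ``connected'' to yield exactly trees and complete graphs. Case (2) crosses ``complete or forest'' with ``disconnected''; since complete graphs are connected, only disconnected forests survive. Case (3) crosses ``noncomplete with a cycle'' with ``connected''. Case (4) crosses ``noncomplete with a cycle'' with ``disconnected'', and here the noncomplete condition is automatic since a disconnected graph is never complete, so the class reduces to disconnected graphs containing at least one cycle. The converse direction in each case comes for free by reading the same biconditionals backwards, so no real obstacle is anticipated: the theorem is essentially a bookkeeping consequence of Corollary~\ref{CM}, Theorem~\ref{linear}, and Proposition~\ref{pd and reg}.
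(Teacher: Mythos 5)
Your route is the same as the paper's: intersect the Cohen--Macaulay/projective-dimension dichotomy (Corollary~\ref{CM}) with the connectivity/regularity dichotomy (Theorem~\ref{linear} together with Proposition~\ref{pd and reg}). The regularity half of your argument is exactly the paper's and is fine: linear resolution iff $G$ connected gives $\reg(I_c(G))=n-2$, and otherwise the bounds force $\reg(I_c(G))=n-1$.

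The gap is in the projective-dimension half. You invoke Theorem~\ref{Min} as giving $\height(I_c(G))=2$ for every graph, but the theorem asserts this (and the equivalence ``$S/I_c(G)$ Cohen--Macaulay iff $\pd(I_c(G))=1$'') only for \emph{noncomplete} $G$. When $G$ is complete, $I_c(G)=I_{n,n-2}$ and its minimal primes are exactly the triangles, so the height is $3$; since $S/I_{n,n-2}$ is Cohen--Macaulay, Auslander--Buchsbaum gives $\pd(I_c(G))=2$, not $1$. So your asserted dichotomy ``$\pd(I_c(G))=1$ iff $G$ is complete or a forest'' fails precisely at the complete graph, which in fact realizes the pair $(2,n-2)$; consequently item (1) cannot be obtained by the bookkeeping you describe, and the complete-graph case needs separate treatment (the computation above actually conflicts with the placement of complete graphs in items (1) and (3) as stated). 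For all noncomplete graphs your argument is sound, and cases (2) and (4) are unaffected since a disconnected graph is never complete. To be fair, the paper's own proof makes the same unqualified step ``Cohen--Macaulay iff $\pd(I_c(G))=1$'' without excluding the complete graph, so your proposal is no weaker than the printed argument --- but the step as written does fail there, and a careful proof must either exclude complete graphs from this equivalence or handle $I_{n,n-2}$ directly.
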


\begin{proof}
    First suppose that $I_c(G)$ has linear resolution. Then, $G$ is connected and $\reg(I_c(G))=n-2$, by Theorem~\ref{linear}. Thus, $G$ is a tree or a complete graph if and only if $S/I_c(G)$ is Cohen--Macaulay by Corollary~\ref{CM}, and if and only if $\pd(I_c(G))=1$. This together with Proposition~\ref{pd and reg} implies (1) and (3). 

    Next, suppose that $I_c(G)$ does not have linear resolution. Then, $G$ is disconnected and $\reg(I_c(G))=n-1$, by Theorem~\ref{linear} and Proposition~\ref{pd and reg}. Thus, $G$ is a disconnected forest if and only if $S/I_c(G)$ is Cohen--Macaulay by Corollary~\ref{CM}, and if and only if $\pd(I_c(G))=1$. This together Proposition~\ref{pd and reg} implies (2) and (4). 
\end{proof}

It would be of interest to study the ideals generated by arbitrary squarefree monomials of degree $n-3$ in $n$ variables.

\section*{Acknowledgment}

The present paper was completed while the first and third authors stayed at Sabanc\i~\"Universitesi, Istanbul, Turkey, August~05 to~14, 2025. Ayesha Asloob Qurehsi was supported by Scientific and Technological Research Council of Turkey T\"UB\.{I}TAK under the Grant No: 124F113. Sara Saeedi Madani was in part supported by a grant from IPM (No. 1404130019). We thank Adam Van Tuyl for bringing to our attention, after the first version of this paper appeared on arXiv, the related work of Ansaldi, Lin, and Shen on Newton complementary duals of monomial ideals.


\begin{thebibliography}{999}
\bibitem[ALS]{ALS} K. Ansaldi, K. Lin, Y. Shen, \textit{Generalized Newton complementary duals of monomial ideals}, J. Algebra Appl., {\bf 20} (2021), no. 2, 2150021.
\medskip

\bibitem[BHZ]{BHZ}
	M. Bigdeli, J. Herzog, R. Zaare-Nahandi, \textit{On the index of powers of edge ideals}, Comm. Algebra, {\bf 46}  (2018), 1080--1095.


\medskip
	\bibitem[ER]{ER}
	J.A.~Eagon, V.~Reiner,  \textit{Resolutions of Stanley--Reisner rings and Alexander duality}, J. Pure Appl. Algebra, {\bf 130} (1998), 265-275.

	
\medskip
	\bibitem[EF]{EF}
	N.~Erey, S.~Faridi,  \textit{Betti numbers of monomial ideals via facet covers}, J. Pure Appl. Algebra, {\bf 220} (2016), no. 5, 1990--2000.

\medskip
	\bibitem[FV]{FV}
C.A.~Francisco, A.~Van Tuyl, \textit{Sequentially Cohen--Macaulay edge ideals}, Proc. Amer. Math. Soc. (2007), no. 8, 2327--2337.

	\medskip
	\bibitem[F] {F}
	R.~Fr\"{o}berg, \textit{On Stanley-Reisner rings}, Topics in algebra, Banarch Center Publications,
	{\bf  26} (2) (1990),  57--70.
    
	
	\medskip
	\bibitem[HHBook]{HHBook}
	J.~Herzog, T.~Hibi, \textit{Monomial Ideals}, Graduate Text in Mathematics, Springer, 2011.
	
	
  
	\end{thebibliography}
	\end{document}